\newcommand{\edz}[1]{}
\newtheorem{theorem}{Theorem}[section]
\newtheorem{lemma}[theorem]{Lemma}
\newtheorem{corollary}[theorem]{Corollary}
\newtheorem{definition}[theorem]{Definition}
\newtheorem{example}[theorem]{Example}
\newtheorem{remark}[theorem]{Remark}
\numberwithin{equation}{section}
\newcommand{\const}{\operatorname{const}}
\let\C\undefined
\newcommand{\C}{\mathbb{C}}
\begin{document}

\title[Godbillon-Vey sequence and Fran\c coise algorithm]{Godbillon-Vey sequence and Fran\c coise algorithm} 
\author[P. Marde\v si\'c]{Pavao Marde\v si\'c}
\address{Universit\'e de Bourgogne, Institute de 
	Math\'ematiques de Bourgogne - UMR 5584 CNRS\\
	Universit\'e de Bourgogne,
	9 avenue Alain Savary,
	BP 47870, 21078 Dijon, \\FRANCE}
\email{mardesic@u-bourgogne.fr}

\author[D. Novikov]{Dmitry Novikov}
\address{Faculty of Mathematics and 
	Computer Science, Weizmann Institute of Science, Rehovot, 7610001\\Israel}
	\email{dmitry.novikov@weizmann.ac.il}
	
\author[L. Ortiz-Bobadilla]{Laura Ortiz-Bobadilla} 
\address{Instituto de Matem\'aticas, Universidad Nacional Aut\'onoma de 
M\'exico 	(UNAM), 	\'Area de la Investigaci\'on Cient\'ifica, Circuito 
exterior, Ciudad 	Universitaria, 04510, Ciudad de M\'exico, M\'exico}
\email{laura@matem.unam.mx}

\author[J. Pontigo-Herrera]{Jessie Pontigo-Herrera}
\address{Instituto de Matem\'aticas, Universidad Nacional Aut\'onoma de 
	M\'exico 	(UNAM), 	\'Area de la Investigaci\'on Cient\'ifica, Circuito 
	exterior, Ciudad 	Universitaria, 04510, Ciudad de M\'exico, M\'exico}
\email{pontigo@matem.unam.mx}

\subjclass{34C07 (primary), 34M15, 34C05, 34C08 (secondary)}
\thanks{This work was supported by Israel Science Foundation grant 1167/17, 
 Papiit (Dgapa UNAM) IN106217, ECOS Nord-Conacyt 249542 and Fordecyt 265667 }

\begin{abstract}
We consider foliations given by deformations $dF+\epsilon\omega$ of exact forms $dF$ in $\mathbb{C}^2$ in a neighborhood of a family of cycles $\gamma(t)\subset F^{-1}(t)$.

In 1996 Fran\c coise gave an algorithm for calculating the first nonzero term of the displacement function $\Delta$ along $\gamma$ of such deformations. This algorithm recalls the well-known Godbillon-Vey sequences discovered in 1971 for investigation integrability of a form $\omega$.
In this paper, we establish the correspondence between the two approaches and translate some results by Casale relating types of integrability for finite Godbillon-Vey sequences to the Fran\c coise algorithm settings. 
	
	\end{abstract}

\keywords{Fran\c coise algorithm, Godbillon-Vey sequence, Melnikov functions, integrability}
\maketitle
\section{Introduction}
Let $\gamma_0\subset\mathbb{C}^2$ be a regular curve, $\Sigma$ a trasversal to $\gamma_0$, $F$ a holomorphic function defined on a tubular neighborhood $U\subset \mathbb{C}^2$  of $\gamma_0$, formed by regular curves $\gamma(t)\subset F^{-1}(t)$, $t\in F(\Sigma)$, with $\gamma_0=\gamma(t_0)$.

Consider the integrable foliation $dF=0$ and its holomorphic deformation
  
\begin{equation}\label{eq:deformation}
dF+\epsilon\omega=0
\end{equation}
in $U$. 
We are interested in the displacement function $\Delta$ (holonomy along $\gamma$ minus identity) of \eqref{eq:deformation}.
Here $\Delta(t)$ denotes the holonomy of \eqref{eq:deformation} along $\gamma(t)$.
It can be developped as
\begin{equation}\label{Delta}
\Delta(t)=\sum_{i\ge 1}\epsilon^iM_i(t).
\end{equation}
The functions $M_i(t)$ are called \emph{Melnikov functions}. If $\Delta\equiv0$, this means that \eqref{eq:deformation} has a first integral in a neighborhood of $\gamma(t)$. If not, then  there exists a \emph{first non-zero Melnikov function $M_\mu$.} 

\subsection{Fran\c coise algorithm}
Fran\c coise algorithm allows to compute the first nonzero Melnikov function $M_\mu$. Let us first recall the following classical Lemma.

\begin{lemma}\label{lem:relexactness}
Given a holomorphic one-form $\omega$ and a family of cycles $\gamma(t)\subset\{F^{-1}(t)\}$, the following conditions are equivalent:
\begin{enumerate}
	\item[(i)] The form $\omega$ verifies
\begin{equation}\label{eq:zero integral}
	\int_{\gamma}\omega\equiv0.
\end{equation}
\item[(ii)] There exists a function  $r$ holomorphic in  a neighborhood of $\gamma$ such that
\begin{equation}
dF\wedge (\omega-dr)\equiv 0.
\end{equation}
\item[(iii)] There exist  functions $g$ and $r$ holomorphic in  a neighborhood of $\gamma$ such that
	\begin{equation}\label{eq:*cond}
	\omega=gdF+dr.
	\end{equation}
\end{enumerate}
\end{lemma}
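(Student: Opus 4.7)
My plan is to set up the cycle (iii) $\Rightarrow$ (i), (ii) $\Leftrightarrow$ (iii), and (i) $\Rightarrow$ (iii), doing the easy directions first and reserving the constructive direction for last. The equivalence (ii) $\Leftrightarrow$ (iii) follows from the de Rham division lemma: after shrinking $U$ we may assume $dF$ is nowhere vanishing on $U$ (since $\gamma_0$ is regular), hence every holomorphic $1$-form $\alpha$ with $dF\wedge\alpha=0$ is of the form $\alpha=g\,dF$ for a unique holomorphic $g$; applied to $\alpha=\omega-dr$ this gives the equivalence. The implication (iii) $\Rightarrow$ (i) is immediate: restricting $\omega=g\,dF+dr$ to $\gamma(t)\subset F^{-1}(t)$ kills $g\,dF$, so $\omega|_{\gamma(t)}=dr|_{\gamma(t)}$ and the integral over the closed loop $\gamma(t)$ vanishes.

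The substance of the lemma is (i) $\Rightarrow$ (iii). For a sufficiently small tubular neighborhood $U$ of $\gamma_0$, the map $F:U\to F(U)$ is a submersion whose fibers $L_t:=F^{-1}(t)\cap U$ are annular neighborhoods of the cycles $\gamma(t)$; in particular $H_1(L_t;\mathbb{Z})=\mathbb{Z}\cdot[\gamma(t)]$. The hypothesis $\int_{\gamma(t)}\omega\equiv 0$ says that the only period of $\omega|_{L_t}$ vanishes, so this restriction is exact: there is a holomorphic function $r_t:L_t\to\mathbb{C}$ with $dr_t=\omega|_{L_t}$, unique up to an additive constant. I fix the constant by requiring $r_t(\Sigma\cap L_t)=0$, using the transversal $\Sigma$ provided in the setup, and define the candidate $r(p):=r_{F(p)}(p)$ on $U$.

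The only nontrivial point is then the holomorphy of $r$, which is where I expect the main technical work. Around any point of $U$, the nonvanishing of $dF$ gives holomorphic coordinates $(F,u)$ in which $\omega=A\,dF+B\,du$, so the local primitive $\int B(F,u)\,du$ is clearly holomorphic in both variables. The normalization at $\Sigma$ shifts this local primitive by a function of $F$ alone, and this shift is holomorphic because $\Sigma\cap L_t$ depends holomorphically on $t$ by the implicit function theorem applied to $F|_\Sigma$. Patching across charts, the leafwise constants differ by holomorphic functions of $F$, so $r$ is globally holomorphic on $U$.

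Finally, $\omega-dr$ vanishes on every vector tangent to the foliation by construction, so $dF\wedge(\omega-dr)=0$; this yields (ii), and the division lemma of the first paragraph promotes it to (iii). The main obstacle is precisely the gluing step: fitting the leafwise primitives $r_t$ into a single holomorphic function on the two-dimensional neighborhood $U$, which is controlled by the holomorphic behavior of $\Sigma\cap L_t$ and the local coordinate computation.
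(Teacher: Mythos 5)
Your argument is correct. Note that there is nothing in the paper to compare it with: the authors state this lemma without proof, as a classical fact (it goes back to Ilyashenko and Fran\c coise), so you are supplying a proof the paper omits. Your route is the standard one: (iii)$\Rightarrow$(i) by restricting to the leaf, (ii)$\Leftrightarrow$(iii) by dividing by $dF$ --- and here no genuine de~Rham division theorem is needed, since $dF$ is nonvanishing on the tubular neighborhood, so $g$ is obtained coefficientwise ($g=a/F_x$ where $F_x\neq 0$, $g=b/F_y$ where $F_y\neq0$, consistent by $F_xb=F_ya$) --- and (i)$\Rightarrow$(ii) by integrating $\omega$ leafwise from the transversal $\Sigma$, the vanishing of the single period over $\gamma(t)$ making the primitive $r_t$ well defined on the annular fibre $L_t$. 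The only compressed step is the holomorphy of $r$; the clean way to organize what you sketch is to cover $\gamma_0$ by finitely many flow boxes with coordinates $(F,u_j)$, note that in each box $r=\rho_j+c_j(F)$ with $\rho_j(t,u)=\int B_j(t,v)\,dv$ holomorphic in $(t,u)$, and propagate holomorphy of the functions $c_j$ from the box containing $\Sigma$ (where $c_j(t)=-\rho_j(\sigma(t))$ and $\sigma(t)=\Sigma\cap F^{-1}(t)$ depends holomorphically on $t$) around the annulus, using that on leafwise-connected overlaps $\rho_j-\rho_k$ is a holomorphic function of $F$ alone; the loop closes consistently because $r_t$ is already globally defined on each leaf thanks to the vanishing period. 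With that bookkeeping made explicit, the proof is complete.
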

Note that  the functions $g$ and $r$ are univalued in $U$ but in general do not extend to polynomial, nor even univalued functions in $\C^2$.

Recall, the classical result of Poincar\'e and Pontryagin: 	
$$
M_1(t)=-\int_{\gamma(t)}\omega.
$$
If $M_1\equiv 0$, then, by Lemma \ref{lem:relexactness}, 
\begin{equation*}
\omega=g_1dF+dr_1,
\end{equation*}
and in that case, Fran\c coise \cite{F} proves the following theorem (see also \cite{Y}, \cite{JMP1}, \cite{JMP2}, \cite{G}, \cite{GI}, \cite{U1}, \cite{U2}, \cite{P}):
\begin{theorem}\label{francoise}
Let \eqref{Delta} be the displacement function of \eqref{eq:deformation}. Assume that  $M_i(t)\equiv0$, for $i=1,\ldots,k$. Then   $M_{k+1}(t)=(-1)^{k+1}\int_{\gamma(t)}g_{k}\omega$, where $g_0=1$ and
 $g_i$, $r_i$ verify 
\begin{equation}\label{eq:g_iR_i}
g_{i-1}\omega=g_idF+dr_i, \quad i=1,\ldots,k.
\end{equation}
\end{theorem}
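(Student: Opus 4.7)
The plan is to prove the theorem by induction on $k$, building on one fundamental observation: if $\tilde\gamma(t)$ denotes a lift of the reference cycle $\gamma(t)$ to a leaf of the perturbed foliation $dF+\epsilon\omega=0$, beginning at $x_0\in\Sigma$ with $F(x_0)=t$ and ending at $x_1\in\Sigma$ with $F(x_1)=t+\Delta(t)$, then on this leaf one has $dF=-\epsilon\omega$, so
$$
\Delta(t)=\int_{\tilde\gamma(t)} dF = -\epsilon \int_{\tilde\gamma(t)}\omega.
$$
The base case $k=0$ is the Poincar\'e-Pontryagin formula stated in the introduction, obtained by setting $\epsilon=0$ in the last integrand. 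For the inductive step I would arrange for the construction of $g_i,r_i$ and the validation of the formula for $M_{i+1}$ to be carried out simultaneously: if one has inductively shown that $M_i(t)=(-1)^i\int_{\gamma(t)}g_{i-1}\omega$ and the hypothesis gives $M_i\equiv 0$, then $\int_{\gamma(t)}g_{i-1}\omega\equiv 0$, so Lemma~\ref{lem:relexactness} yields the next $g_i,r_i$ satisfying \eqref{eq:g_iR_i}.

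The key step is to iterate the recurrence obtained from \eqref{eq:g_iR_i}: substituting $g_{i-1}\omega = g_i\,dF+dr_i$ into the line integral and using once more that $dF=-\epsilon\omega$ on the leaf gives
$$
\int_{\tilde\gamma(t)} g_{i-1}\omega \;=\; -\epsilon\int_{\tilde\gamma(t)} g_i\omega \;+\; \bigl[r_i(x_1)-r_i(x_0)\bigr].
$$
Applying this $k$ times, starting from the identity $\Delta(t)=-\epsilon\int_{\tilde\gamma(t)}g_0\omega$, produces the exact expression
$$
\Delta(t) \;=\; (-\epsilon)^{k+1}\int_{\tilde\gamma(t)} g_k\omega \;+\; \sum_{i=1}^{k}(-\epsilon)^i\bigl[r_i(x_1)-r_i(x_0)\bigr].
$$
Each boundary term is $O(\Delta(t))$, because $x_0,x_1$ lie on the transversal $\Sigma$ and differ by $\Delta(t)$ in the $F$-parameter, while $r_i$ is holomorphic on $U$. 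The hypothesis $M_1\equiv\cdots\equiv M_k\equiv 0$ forces $\Delta(t)=O(\epsilon^{k+1})$, so every boundary contribution is $O(\epsilon^{k+2})$. Hence the leading coefficient of $\Delta$ in $\epsilon$ comes entirely from the main term, and evaluating $\int_{\tilde\gamma(t)}g_k\omega$ to leading order (i.e.\ on $\gamma(t)$ itself) produces $M_{k+1}(t)=(-1)^{k+1}\int_{\gamma(t)}g_k\omega$.

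The principal obstacle is the bookkeeping of orders in $\epsilon$ in a way that is uniform in $t$ on the family of cycles, together with the mild circularity in the induction: the boundary-term estimate uses $\Delta=O(\epsilon^{k+1})$, which itself relies on the inductive identification $M_i=(-1)^i\int g_{i-1}\omega$ for $i\le k$. I would resolve this by first noting that the displayed exact expression for $\Delta$ is valid for \emph{any} $\epsilon$ small enough, so expanding both sides in powers of $\epsilon$ and matching coefficients simultaneously delivers the vanishing of the lower $M_i$'s (via the inductive formulas) and the claimed expression for $M_{k+1}$. One should also verify that $\tilde\gamma(t)$ and the endpoints $x_0,x_1$ depend analytically on $(\epsilon,t)$ so that integration and Taylor expansion commute, but this follows from the standard analytic dependence of holonomy on the parameter of the deformation.
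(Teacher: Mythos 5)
Your proposal is correct: the identity $\Delta(t)=-\epsilon\int_{\tilde\gamma(t)}\omega$ on the perturbed leaf, the iterated substitution of $g_{i-1}\omega=g_i\,dF+dr_i$ with $dF=-\epsilon\omega$, and the observation that the boundary terms $r_i(x_1)-r_i(x_0)=O(\Delta)=O(\epsilon^{k+1})$ contribute only at order $\epsilon^{k+2}$ is exactly the classical argument of Fran\c coise (and Yakovenko), which the paper itself does not reprove but cites as \cite{F}. You also correctly note and resolve the two standard delicate points: the inductive construction of $(g_i,r_i)$ via Lemma~\ref{lem:relexactness} (which the paper likewise dispatches by induction) and the analytic dependence of the lifted path on $(\epsilon,t)$ needed to pass from $\int_{\tilde\gamma(t)}g_k\omega$ to $\int_{\gamma(t)}g_k\omega$ at leading order.
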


The existence of the decomposition \eqref{eq:g_iR_i}, follows by induction from Lemma \ref{lem:relexactness}.

\begin{definition}
\label{francpair}
We call any pair $(g_i,r_i),$  verifying \eqref{eq:g_iR_i} an $i$-th Fran\c coise pair associated to the deformation \eqref{eq:deformation} and call the sequence $(g_i,r_i),$ $i=0,1,\ldots$ a Fran\c coise sequence. We say that the  \emph{length of a Fran\c coise sequence} is $\ell$, if $\ell$ is the smallest index such that $g_{\ell+1}=0.$ If there does not exist such an index, we say that the sequence if of infinite length.
\end{definition}

\medskip

\subsection{Godbillon-Vey sequence}
On the other hand, the classical \emph{Godbillon-Vey sequence} is associated to a foliation defined by a single one form 

\begin{equation}\label{omega}
\omega=0.
\end{equation}
It is a sequence of one-forms $\omega_0=\omega$, $\omega_i$, $i=1,\ldots$ such that the formal one-form 
\begin{equation}\label{Omega}
\Omega=d\epsilon+\omega_0+\sum_{i=1}\frac{\epsilon^i}{i!}\omega_i
\end{equation}
in $\mathbb{C}^2\times\mathbb{C}$ verifies the \emph{formal integrability condition}

\begin{equation}\label{integrability}
\Omega\wedge\tilde{d}\Omega=0.
\end{equation}
Here  $\tilde d=d_\epsilon+d$ denotes the total differential with respect to all variables $x,y,\epsilon$.

Condition \eqref{integrability} is equivalent to
$$
\begin{array}{rcl}
d\omega_0&=&\omega_0\wedge\omega_1,\\
d\omega_1&=&\omega_0\wedge\omega_2,\\
\cdots& &\cdots\\
d\omega_n&=&\omega_0\wedge\omega_{n+1}+
\sum_{k=1}^n {n\choose k}
\omega_k\wedge\omega_{n-k+1}.\\
\end{array}
$$
We say that the \emph{Godbillon-Vey sequence is of length $n$} if the forms $\omega_{k}$ vanish for $k\geq n$.

\begin{definition}\label{DLR} 
Let $K$ be a differential field,  $G$ a function and   $K_G$ the extension of $K$ by $G$. We say that the extension $K_G$ is: \emph{Darboux}, \emph{Liouville}  or \emph{Riccati}, respectively,  if it belongs to a  finite sequence of  field extensions starting from the field $K$. The extensions in each step are either algebraic or given respectively by solutions of the equations  $dG=\eta_0$, $dG=G\eta_1+\eta_0$ or
  $dG=G^2\eta_2+G\eta_1+\eta_0$,  with $\eta_i$ one-forms with coefficients in the corresponding field extensions.
  
  In that case, we call the function $G$ Darboux, Liouville or  Riccati with respect to $K$.
  \end{definition}

In \cite{C}, Casale relates the length $n$ of the Godbillon-Vey sequence to the type of first integral of the foliation given by \eqref{omega}:
\begin{theorem}\hfill
\begin{enumerate}
\item[(i)] There exists a  Godbillon-Vey sequence of length $1$ if and only if \eqref{omega} has a Darboux first integral.
\item[(ii)] There exists a  Godbillon-Vey sequence of length $2$ if and only if \eqref{omega} has a Liouvillian first integral.
\item[(iii)] There exists a  Godbillon-Vey sequence of length $3$ if and only if \eqref{omega} has a Riccati first integral.
\end{enumerate}
\end{theorem}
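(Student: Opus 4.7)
The plan is to prove each of the three equivalences by matching, in both directions, the length-$n$ Godbillon--Vey relations against the tower of differential field extensions defining the corresponding type of first integral in Definition~\ref{DLR}. The uniform picture I would exploit is that a length-$n$ sequence truncates $\Omega$ to a polynomial of degree $n-1$ in $\epsilon$, so that the Frobenius integrability of $\Omega$ becomes exactly the integrability of the associated transverse ODE
\begin{equation*}
d\epsilon=-\omega_0-\epsilon\,\omega_1-\tfrac{\epsilon^2}{2}\,\omega_2-\cdots
\end{equation*}
of degree $n-1$ in $\epsilon$, which is constant, affine, or Riccati for $n=1,2,3$ respectively.

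For $n=1$ and $n=2$ the arguments are routine in both directions. I would first observe that length $1$ forces $d\omega=0$, and deduce the Darboux first integral from the residue decomposition $\omega=\sum_i\lambda_i\,df_i/f_i+dg$ of a closed rational $1$-form, giving $H=e^g\prod_i f_i^{\lambda_i}$; conversely, for a Darboux $H$ the closed form $dH/H$ is a representative of the same foliation, so one can take $\omega_i\equiv 0$ for $i\geq 1$. For $n=2$ the relations reduce to $d\omega_0=\omega_0\wedge\omega_1$ and $d\omega_1=0$; I would adjoin the integrating factor $\mu$ with $d\mu/\mu=\omega_1$ (a Liouvillian step since $\omega_1$ is closed), check that $\mu\omega_0$ is closed, and integrate once more to produce a Liouvillian first integral. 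The converse starts from a Liouvillian integrating factor $\mu$ of $\omega$ and sets $\omega_1:=d\mu/\mu$, which is automatically closed.

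The main effort, and the step I expect to be hardest, is case $n=3$. The first thing I would do is verify by a direct computation (expanding $d^2\phi=0$ and matching powers of $\phi$) that the three surviving length-$3$ relations
\begin{equation*}
d\omega_0=\omega_0\wedge\omega_1,\quad d\omega_1=\omega_0\wedge\omega_2,\quad d\omega_2=\omega_1\wedge\omega_2
\end{equation*}
are equivalent to the Frobenius integrability of the Riccati system $d\phi=-\omega_0-\phi\,\omega_1-\tfrac{\phi^2}{2}\omega_2$. The resulting Frobenius-integrable system then has a solution $\phi(x,y;c)$ in a Riccati extension, parametrized by its initial value $c$ on a chosen transversal; inverting this family and using that any two such solutions differ by a M\"obius transformation should let me extract a first integral in a Riccati extension of the coefficient field. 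In the converse direction, from a Riccati first integral $H$ satisfying $dH=\eta_0+H\eta_1+H^2\eta_2$ I would read off candidates for $\omega,\omega_1,\omega_2$ in terms of the $\eta_i$ and verify the three structure equations by direct computation.

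The main obstacle I anticipate in case $n=3$ is controlling the gauge ambiguity: the forms $\omega_i$ are not intrinsic to the foliation but depend on a choice of representative $\omega$, and the reconstruction of $\omega_1,\omega_2$ from a Riccati first integral has to be carried out modulo this gauge. Following Casale, I would resolve this by identifying the length of the Godbillon--Vey sequence with the dimension of the transverse Malgrange pseudogroup of the foliation, so that lengths $1,2,3$ correspond to the translation, affine, and projective transverse pseudogroups on the $\epsilon$-line, which match exactly the Darboux, Liouvillian, and Riccati classes of Definition~\ref{DLR}.
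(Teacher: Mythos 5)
A preliminary remark: this statement is not proved in the paper at all --- it is Casale's theorem, quoted from \cite{C} (with the details in \cite{Cthesis}), so your proposal can only be judged on its own merits, not against an internal argument of the paper. Your treatment of lengths $1$ and $2$ follows the standard route and is essentially sound: length $1$ forces $d\omega_0=0$ and one integrates (with the paper's Definition \ref{DLR} the integral $\int\omega_0$ is already Darboux, so the residue decomposition is an unnecessary detour), and for length $2$ the computation $d(\mu\omega_0)=\mu(\omega_1\wedge\omega_0+\omega_0\wedge\omega_1)=0$ with $d\mu/\mu=\omega_1$ closed is correct. But already in the converse of (ii) there is a gap you pass over: you ``start from a Liouvillian integrating factor $\mu$ of $\omega$'' and set $\omega_1:=d\mu/\mu$. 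The existence of an integrating factor whose logarithmic derivative is \emph{meromorphic}, i.e.\ an admissible Godbillon--Vey form lying in the coefficient field rather than merely in the Liouvillian extension where the first integral lives, is exactly Singer's theorem \cite{S}; it must be invoked explicitly, since without it $\omega_1$ need not be a legitimate term of a Godbillon--Vey sequence. (The same issue of meromorphy of the $\omega_i$, not just their formal existence, is what makes all three converse directions nontrivial; note also that you silently replace the given $\omega$ by another representative of the foliation, which is consistent with Casale's formulation but not with the paper's literal ``$\omega_0=\omega$''.)

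The genuine gap is case (iii). The equivalence of the three length-$3$ relations with Frobenius integrability of the Riccati Pfaffian system $d\phi=-\omega_0-\phi\,\omega_1-\tfrac{\phi^2}{2}\omega_2$ is indeed a routine verification of \eqref{integrability}, but both substantive implications are left as intentions: (a) extracting from this integrable system a first integral of \eqref{omega} lying in a Riccati extension of the coefficient field (``inverting this family \dots should let me extract a first integral'' is not an argument; one needs, e.g., the constancy of the cross-ratio of solutions, or the passage to the associated $\mathfrak{sl}_2$ linear system, carried out so that the data defining the extension stay in the base field), and (b) the converse, producing \emph{meromorphic} $\omega_1,\omega_2$ from a Riccati first integral, which is the projective analogue of Singer's theorem and is the hard part of Casale's work. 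Your proposed cure for the gauge ambiguity --- ``identifying the length of the Godbillon--Vey sequence with the dimension of the transverse Malgrange pseudogroup'' --- is not a fix but an appeal to the main result of \cite{Cthesis}, i.e.\ to (a refinement of) the very statement to be proved, so as written the argument for (iii) is circular. In summary: (i) is fine, (ii) needs Singer's theorem cited at the precise point indicated, and (iii) remains an outline rather than a proof.
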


Here we develop a version of Godbillon-Vey sequences well-adapted to studying a deformation of an integrable foliation given by \eqref{eq:deformation}. 
Recall that on the level $\epsilon=0$ it is integrable (with first integral $F$). 
The Godbillon-Vey sequence gives a condition for verifying if this integrability extends to $\epsilon\ne0$.

We define the form
\begin{equation}\label{eq:Omega}
\Omega=R d\epsilon+(dF+\epsilon\omega)G, 
\end{equation}
with 
\begin{equation}\label{eq:rGF}
G=\sum_{i=0}\epsilon^iG_i,\quad R=\sum_{i=0}\epsilon^i R_{i+1}
\end{equation}
unknown functions and $G_0\equiv 1$.
The form \eqref{eq:Omega} of $\Omega$ comes from the requirement to define the same foliation as \eqref{eq:deformation} on each level $\epsilon=const$. 

We give a relative version of the definition of different types of first integral for the deformation \eqref{eq:deformation}.

\begin{definition}\label{Feps}
We denote by $K_{F,\omega}$ the field associated to the deformation \eqref{eq:deformation}. That is, the smallest differential field in a tubular neighborhood $U$ of a cycle $\gamma_0$ containing the functions given by coefficients of $dF$ and $\omega$.

Let $F_\epsilon=\sum_{i=0}^\ell \epsilon^i F_i$, $\ell<\infty$, be a first integral of \eqref{eq:deformation}. We say that it is Darboux, Liouville or Riccati, respectively, if all $F_i$ are in the corresponding extension of the field  $K_{F,\omega}$.

\end{definition}

\begin{theorem}\label{frobenious}\cite{GV}
There exists a solution $(G,R)$ of the equation 
\begin{equation}\label{eq:frob}
\Omega\wedge \tilde d\Omega=0,
\end{equation} if and only if 
the deformation preserves formal integrability along $\gamma$ i.e. $\Delta\equiv0$. 
\end{theorem}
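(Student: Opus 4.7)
The plan is to identify the Frobenius condition \eqref{eq:frob} on $\Omega$ with the existence of a formal first integral $F_\epsilon(x,y,\epsilon)$ of the deformation \eqref{eq:deformation} extending $F$ at $\epsilon=0$; by the Fran\c coise algorithm this is in turn equivalent to the vanishing of every Melnikov function, i.e.\ to $\Delta\equiv 0$.

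For $(\Leftarrow)$, assume $\Delta\equiv 0$, so every $M_i$ vanishes identically. Iterating Theorem~\ref{francoise} together with Lemma~\ref{lem:relexactness} produces a Fran\c coise sequence $(g_i,r_i)_{i\geq 0}$ of infinite length satisfying \eqref{eq:g_iR_i}. The key observation is that, after the sign substitution $\epsilon\mapsto-\epsilon$ and summation, the identities $g_{i-1}\omega-g_i\,dF=dr_i$ collapse into the single formal relation
\[
dF_\epsilon \;=\; G\cdot(dF+\epsilon\omega), \qquad F_\epsilon := F-\sum_{k\ge 1}(-\epsilon)^k r_k, \qquad G := \sum_{k\ge 0}(-\epsilon)^k g_k,
\]
with $d$ the differential in $(x,y)$ and $G_0=g_0=1$ as required. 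Setting $R:=\partial_\epsilon F_\epsilon$, one obtains $\Omega=\tilde d F_\epsilon$, and \eqref{eq:frob} is automatic from $\tilde d\circ\tilde d=0$.

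For $(\Rightarrow)$, suppose $(G,R)$ solves \eqref{eq:frob}. Since $G_0=1$, $\Omega$ is nonvanishing in a neighborhood of $\gamma_0\times\{0\}$ in $U\times(\mathbb{C},\epsilon)$, and the slice-wise restriction $\Omega|_{\epsilon=c}$ defines the same foliation as $dF+c\omega=0$. The Frobenius theorem, applied formally in $\epsilon$ on the tubular neighborhood of $\gamma_0\times\{0\}$, then produces a first integral $F_\epsilon$ of $\ker\Omega$ with $F_\epsilon|_{\epsilon=0}=F$, whose $\epsilon$-coefficients are single-valued holomorphic functions on $U$. Each restriction $F_\epsilon|_{\epsilon=c}$ is therefore a single-valued first integral of the deformed foliation in $U$, so the deformed cycles through $\gamma_0$ lie in level sets of a single-valued function and close up; equivalently, the holonomy along $\gamma(t)$ is trivial and $\Delta\equiv 0$.

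The main technical obstacle is the combinatorial bookkeeping in $(\Leftarrow)$: checking that the alternating signs $(-1)^k$ are chosen correctly so that the $(G,R)$ built from the Fran\c coise sequence genuinely realizes the prescribed form \eqref{eq:Omega} with $G_0=1$. The $(\Rightarrow)$ direction is conceptually clean but relies on the Frobenius theorem in the formal category together with the tubular-neighborhood geometry around $\gamma_0$, which together ensure that the local first integral extends coherently along the entire cycle.
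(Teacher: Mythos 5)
Your ($\Leftarrow$) half is correct and is essentially the paper's own construction: iterating Lemma~\ref{lem:relexactness} to get an infinite Fran\c coise sequence and summing \eqref{eq:g_iR_i} with alternating signs so that $\Omega=\tilde d F_\epsilon$ with $F_\epsilon$ as in \eqref{F_eps} is exactly the computation in the proof of Theorem~\ref{thm:main}(ii), and your sign bookkeeping ($G=\sum(-\epsilon)^kg_k$, $R=\partial_\epsilon F_\epsilon$) is consistent with \eqref{eq:giri} and \eqref{eq:rGF}.

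The ($\Rightarrow$) half has a genuine gap. You invoke the Frobenius theorem to produce a first integral $F_\epsilon$ of $\ker\Omega$ ``whose $\epsilon$-coefficients are single-valued holomorphic functions on $U$.'' Frobenius is a local statement, and $U$ is a tubular neighborhood of the cycle $\gamma_0$, hence not simply connected: a local first integral continued along $\gamma$ may acquire monodromy, and the statement that its coefficients $F_i$ are single-valued on $U$ is precisely equivalent to the conclusion $\Delta\equiv0$ you are trying to prove (indeed, in the paper's proof of Theorem~\ref{thm:main}(i) the single-valuedness of the $F_i$ is \emph{deduced from} the hypothesis $M_i\equiv0$, not the other way around). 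So as written this direction is circular. To close it you need one of the two arguments the paper actually uses: either the geometric one in its proof of Theorem~\ref{frobenious} --- the integrable $\Omega$ defines a two-dimensional foliation of a neighborhood of $\gamma(t)\times\{0\}$ in $\C^3$ whose leaf through the compact cycle $\gamma(t)\times\{0\}$ meets the slice $\{\epsilon=0\}$ along the leaves of $dF=0$; its holonomy along $\gamma(t)\times\{0\}$ can therefore be computed on a transversal inside $\{\epsilon=0\}$, where it is the identity, and this forces the slice foliations $dF+\epsilon\omega=0$ to have trivial displacement --- or the order-by-order algebraic argument of Theorem~\ref{thm:main}(iii), where at each weight $k$ the integrability of $\Omega$ yields an equation of the form $dF\wedge\bigl((-1)^{k-1}kg_{k-1}\omega-dR_k\bigr)=0$ and Lemma~\ref{lem:relexactness} then converts its solvability with single-valued data into $\int_\gamma g_{k-1}\omega\equiv0$, i.e.\ $M_k\equiv0$. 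Either route supplies exactly the global (monodromy) input that your appeal to Frobenius omits.
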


\begin{proof} Indeed, if there exists a solution $(G,R)$ of \eqref{frobenious}, then by Frobenius theorem, $\Omega$ defines a foliation in a neighborhood of $(\gamma(t),0)$ in  $\C^3$ transversal to $\epsilon=0$. It follows from the existence of this  foliation  that the integrability on the level $\epsilon=0$ is preserved on nearby  levels.  
\end{proof}

We will also consider the Godbillon-Vey equation up to order $k$ with $\Omega, G, R$ given by \eqref{eq:Omega} and \eqref{eq:rGF}:
\begin{equation}\label{Omega_k}
\Omega\wedge\tilde{d}\Omega=0\mod\epsilon^{k+1}.
\end{equation}

\begin{definition}
\label{GVpair}
We call any pair $(G_i,R_i),$  verifying \eqref{Omega_k} an $i$-th \emph{ Godbillon-Vey pair associated to the deformation} \eqref{eq:deformation}, 
 $(G_i,R_i),$ $i=0,1,\ldots$ is the \emph{ Godbillon-Vey sequence associated to the deformation}. We say that the length of a  Godbillon-Vey sequence associated to the deformation is $\ell$,
 if $\ell$ is the smallest index such that $G_{\ell+1}=0$. If there does not exist such an index, we say that the sequence if of infinite length.
\end{definition}

\begin{remark} \label{g_i=0}
Note that the length is associated to any  Fran\c coise sequence or Godbillon-Vey sequence associated to the deformation \eqref{eq:deformation}. 

However, one deformation \eqref{eq:deformation} can have Fran\c coise sequences (or Godbillon-Vey sequence) of different lengths. The minimal length is well defined and one can choose a Fran\c coise sequence so that all $g_k=0$, for $k>\ell$. The same applies for the Godbillon-Vey sequences. 
 
Fran\c coise pairs $(g_i,r_i)$ and Godbillon-Vey pairs $(G_i,R_i)$ exist for all $i=1,2,3,\ldots$ if and only if the deformation preserves integrability along $\gamma$.
\end{remark}

\section{Main theorems}

In this section we state our two main results. The first establishes the relationship between the Fran\c coise pairs and the  Godbillon-Vey pairs associated to the deformation. 
In particular it shows that the minimal length of Fran\c coise sequences and Godbillon-Vey sequences coincide:

\begin{theorem}\label{thm:main} \hfill
\begin{enumerate}
\item[(i)] The Melnikov functions $M_i$, $i=1,\ldots,k$, are identically equal to zero if and only if one can solve 
the  equation 
\begin{equation}\label{Omegak}
\Omega\wedge \tilde{d}\Omega=0\mod \epsilon^{k+1},
\end{equation}

\item[(ii)]
For each choice of the Fran\c coise sequence $(g_i, r_i)$, $i=1,\ldots,k$, the Godbillon-Vey sequence $(G_i, R_i)$, $i=1,\ldots,k$, can be chosen verifying
 the equations 
\begin{equation}\label{eq:giri}
G_i=(-1)^ig_i,\quad R_i=(-1)^{i+1}ir_{i}.
\end{equation}
\item[(iii)] If $\Omega$ verifies \eqref{Omegak} then
\begin{itemize}
	\item[a)]  there exists a function $N=1+\sum_{i=1}^k\epsilon^i n_i$ such that
$$\Omega=N\tilde{d}F_\epsilon\mod \epsilon^{k+1}.$$
Then  the function $F_\epsilon$ is of the form
\begin{equation}\label{F_eps}
F_\epsilon=F+\sum_{i=1}^k(-1)^{i+1}\epsilon^ir_i.
\end{equation}
 and
\begin{equation}\label{dtildeF}
\tilde{d}F_\epsilon
=\tilde{R}d\epsilon + \tilde{G}\left(dF+\epsilon\omega\right).
\end{equation}

\item[b)] Let  $\tilde{G}$  and $\tilde{R}$ be given in
\eqref{dtildeF} and $(G_i, R_i)$, $i=1,\ldots,k$, be its coefficients  as in \eqref{eq:rGF}.
 Then the functions  $(g_i, r_i)$, $i=1,\ldots,k$,  given by \eqref{eq:giri}   are  Fran\c coise pairs.
%
\end{itemize}

\end{enumerate}
\end{theorem}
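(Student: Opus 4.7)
My plan is to prove all three parts together by induction on $k$, anchored by one explicit algebraic identity that makes the identifications $G_i=(-1)^i g_i$, $R_i=(-1)^{i+1}i\,r_i$ of part (ii) transparent. Given a Fran\c coise sequence $(g_i,r_i)$, $i\le k$, with $g_0=1$, I set $F_\epsilon=F+\sum_{i=1}^{k}(-1)^{i+1}\epsilon^i r_i$ and expand
\[
\tilde{d} F_\epsilon = \Bigl(\sum_{i=1}^{k}(-1)^{i+1}i\epsilon^{i-1}r_i\Bigr)d\epsilon + dF+\sum_{i=1}^{k}(-1)^{i+1}\epsilon^i\,dr_i.
\]
Substituting $dr_i=g_{i-1}\omega-g_i\,dF$ from \eqref{eq:g_iR_i} and regrouping by powers of $\epsilon$, the contributions telescope modulo $\epsilon^{k+1}$ into
\[
\tilde{d} F_\epsilon \equiv \tilde R\,d\epsilon + \tilde G\,(dF+\epsilon\omega) \mod \epsilon^{k+1},\qquad \tilde G=\sum_{j=0}^{k}(-1)^j \epsilon^j g_j,\quad \tilde R=\sum_{i=1}^{k}(-1)^{i+1}i\epsilon^{i-1}r_i.
\]
Reading off coefficients of $\epsilon$ gives (ii).

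The $(\Rightarrow)$-direction of (i) is then immediate: if $M_1=\cdots=M_k=0$, Theorem~\ref{francoise} produces, inductively, a Fran\c coise sequence up to order $k$; for this sequence, $\Omega:=\tilde{d} F_\epsilon$ is exact, hence trivially solves \eqref{Omegak} with coefficients $(G_i,R_i)$ as in (ii).

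For the $(\Leftarrow)$-direction of (i), I induct on $k$. Suppose \eqref{Omegak} is solvable and, by the inductive hypothesis, $M_1=\cdots=M_{k-1}=0$ with matching pairs $(g_i,r_i)$, $(G_i,R_i)$, $i\le k-1$, already fixed. Isolating the order-$\epsilon^{k}$ coefficient of $\Omega\wedge\tilde{d}\Omega$ and using the lower-order Fran\c coise relations to cancel known terms, after extracting the $d\epsilon$ factor this coefficient reduces to $dF\wedge\bigl(g_{k-1}\omega - g_k\,dF - d\phi\bigr)$ on $\C^2$, where $g_k$ and $\phi$ arise from the unknowns $G_k$ and $R_{k+1}$. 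Its vanishing is equivalent to $g_{k-1}\omega = g_k\,dF + d\phi$ in a neighborhood of $\gamma$, which by Lemma~\ref{lem:relexactness} happens iff $\int_\gamma g_{k-1}\omega = 0$. By Theorem~\ref{francoise}, this integral is $\pm M_k$, forcing $M_k=0$.

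Part (iii) follows from a formal version of Frobenius (Theorem~\ref{frobenious}): given any solution of \eqref{Omegak}, an integrating factor $N=1+\sum \epsilon^i n_i$ and a function $F_\epsilon$ with $\Omega\equiv N\tilde{d} F_\epsilon \mod \epsilon^{k+1}$ can be built order by order, the Frobenius obstruction at each step being killed by \eqref{Omegak}. Matching the $d\epsilon$-component of $N\tilde{d}F_\epsilon$ with $R$ then pins $F_\epsilon$ to the form \eqref{F_eps} for a suitable Fran\c coise sequence, and \eqref{dtildeF} is the core identity applied to this $F_\epsilon$; part (iii)(b) is the reverse reading of (ii). The main obstacle will be the sign and combinatorial bookkeeping in the core identity and in the order-$\epsilon^k$ expansion of $\Omega\wedge\tilde{d}\Omega$; the simplifying feature throughout is that the unknown Godbillon--Vey coefficients enter \eqref{Omegak} linearly at each order, with the obstruction reducing at every step to the single period condition $\int_\gamma g_{k-1}\omega = 0$ on the fiber $F=t$.
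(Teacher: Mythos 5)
Your core identity and the deduction of part (ii), as well as the forward direction of (i), are correct and are essentially the paper's own argument: using \eqref{eq:g_iR_i} one gets $(dF+\epsilon\omega)\sum_{i}(-1)^i\epsilon^i g_i\equiv dF+\sum_i(-1)^{i-1}\epsilon^i dr_i$, so with the choices \eqref{eq:giri} the form $\Omega$ equals $\tilde d F_\epsilon$ modulo $\epsilon^{k+1}$, is closed to that order, and solves \eqref{Omegak}.

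The converse of (i) together with (iii) is where the substance lies, and there your sketch has genuine gaps. First, a given solution of \eqref{Omegak} comes with coefficients $G_i,R_i$ that need not satisfy \eqref{eq:giri} for any Fran\c coise sequence: at each order the coefficient $G_j$ is only determined up to an arbitrary summand, since it drops out of the obstruction after wedging with $dF$. Your inductive hypothesis ``with matching pairs $(g_i,r_i)$, $(G_i,R_i)$ already fixed'' therefore assumes what has to be arranged; the paper handles this by constructing, inside the same induction, a unit $N_{k-1}=1+\sum\epsilon^i n_i$ with $\Omega=N_{k-1}\tilde d F_{\epsilon,k-1}+o_{k-1}$ and then working with $\Theta=N_{k-1}^{-1}\Omega$, whose lower-order coefficients are thereby forced into the form \eqref{eq:giri}. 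In other words, (i)$\Leftarrow$ and (iii)(a) must be proved simultaneously; (iii) cannot be delegated to a ``formal version of Frobenius'' --- Theorem~\ref{frobenious} is the soft, untruncated statement, while producing $\Omega=N\tilde d F_\epsilon$ with $F_\epsilon$ of the specific form \eqref{F_eps} is precisely the content to be established, its order-by-order obstruction being exactly where Lemma~\ref{lem:relexactness} enters. Second, your bookkeeping of that obstruction is off: at the relevant weight the only unknown entering is $R_k$ (not $R_{k+1}$), the unknown $G_k$ disappears from the equation (it stays free and is absorbed into $N_k$), and the condition reads $dF\wedge d\epsilon\wedge\left((-1)^{k-1}k\,g_{k-1}\omega-dR_k\right)=0$; the function $g_k$ is then \emph{defined} a posteriori by the Gelfand--Leray decomposition $k\,g_{k-1}\omega=(-1)^{k-1}dR_k+k\,g_k\,dF$, rather than ``arising from $G_k$''. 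The overall shape of your argument (obstruction $=$ relative exactness of $g_{k-1}\omega$ $=$ vanishing of $M_k$ via Lemma~\ref{lem:relexactness}) is the right one, but the normalization by the integrating factor and the correct identification of which coefficients are unknown at each weight are missing, and these are exactly what the paper's induction on weights supplies.
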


\edz{In fact, one should say "of weight $\ge k+1$" instead of $\mod \epsilon^{k+1}$", i.e. $d\epsilon$ should have weight $1$}

Our second result gives the type  of local first integral $F_\epsilon$ of the deformation \eqref{eq:deformation} if the  length of its Fran\c coise sequence is finite. The first result is that the first integral is in a finite sequence of extensions of Darboux type. The second shows that it is in a single extension of Liouvillian type.

\begin{theorem}\label{thm:first integral}
Let $\eta_\epsilon=dF+\epsilon\omega$ as in  \eqref{eq:deformation}  be such that there exists a Fran\c coise sequence of finite length $\ell$. 
\begin{enumerate}
\item[(i)]
Then 
\eqref{eq:deformation} admits a univalued first integral which is Darboux with respect to the field $K_{F,\omega}$ of the deformation \eqref{eq:deformation}. 
\item[(ii)]
Then there exists a meromorphic form
$\tilde\eta_{\epsilon}$ verifying the Godbillon-Vey sequence of length $2$:
$$
\begin{aligned}
d\eta_\epsilon&=\eta_\epsilon\wedge\tilde{\theta_\epsilon}\\
d\tilde\theta_\epsilon&=0,\\
\end{aligned}
$$
such that there exists a (possibly multivalued) first integral $\tilde{F}_\epsilon$ of \eqref{eq:deformation} verifying 
$$
d\tilde{F}_\epsilon=f\eta_\epsilon,
$$
where 
$$
df=f\tilde{\theta}_\epsilon
$$
is a (possibly multivalued) function in a tubular neighborhood  $U$ of the cycle $\gamma_0$.

In particular, the function $f$ belongs to a Liouville extension of $K_{F,\omega}$ and $\tilde{F}_\epsilon$ belongs to a  Darboux extension
of this Liouville extension.  
\end{enumerate}

\end{theorem}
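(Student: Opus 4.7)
My plan is to exhibit the required first integral directly from the Françoise data and then rewrite it in the Liouville form demanded by part~(ii). Finite length $\ell$ means one can choose $g_{\ell+1}=0$, so the next relation of the algorithm, $g_\ell\omega = g_{\ell+1}dF+dr_{\ell+1}$, becomes $g_\ell\omega = dr_{\ell+1}$ and furnishes a well-defined univalued primitive $r_{\ell+1}$ in $U$. Accordingly, extend the truncation of Theorem~\ref{thm:main}(iii) by one step and set
$$
F_\epsilon = F + \sum_{i=1}^{\ell+1}(-1)^{i+1}\epsilon^i r_i.
$$
Substituting $dr_i = g_{i-1}\omega - g_idF$ into $dF_\epsilon$ and collecting terms, using $g_{\ell+1}=0$ at the top to close up the telescoping, yields the identity
$$
dF_\epsilon = \tilde G\,(dF+\epsilon\omega) = \tilde G\,\eta_\epsilon, \qquad \tilde G := \sum_{i=0}^\ell (-1)^i\epsilon^i g_i.
$$
So $F_\epsilon$ is a univalued first integral of \eqref{eq:deformation}, polynomial in $\epsilon$, with $\tilde G$ serving as its integrating factor for $\eta_\epsilon$.

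For part~(i), I would argue by induction that every pair $(g_i,r_i)$ lies in a finite Darboux tower over $K_{F,\omega}$: starting from $g_0=1\in K_{F,\omega}$, each $r_i$ is a primitive of a one-form with coefficients in the current field, precisely a Darboux-type adjunction in Definition~\ref{DLR}, and each $g_i = (g_{i-1}\omega - dr_i)/dF$ is a rational combination of already-constructed quantities and hence remains in the same field. Consequently $F_\epsilon$, a polynomial in $\epsilon$ whose coefficients are the $r_i$, is itself Darboux over $K_{F,\omega}$.

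For part~(ii), set $f := \tilde G$ and $\tilde\theta_\epsilon := df/f = d\log\tilde G$. Then $d\tilde\theta_\epsilon = 0$ automatically, and applying $d$ to the identity $dF_\epsilon = f\eta_\epsilon$ gives $0 = df\wedge\eta_\epsilon + f\,d\eta_\epsilon$, i.e. $d\eta_\epsilon = \eta_\epsilon\wedge\tilde\theta_\epsilon$. This is exactly the Godbillon--Vey length-$2$ system for $\eta_\epsilon$. Taking $\tilde F_\epsilon := F_\epsilon$, the identities $d\tilde F_\epsilon = f\eta_\epsilon$ and $df = f\tilde\theta_\epsilon$ hold by construction. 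Since $f=\tilde G$ lies in the Darboux (hence Liouville) tower built in part~(i), and $\tilde F_\epsilon$ is a primitive of $f\eta_\epsilon$, it belongs to a Darboux extension of $K_{F,\omega}(f)$, as required.

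The main obstacle I anticipate is the bookkeeping around the index~$\ell+1$: the summation defining $F_\epsilon$ must be extended beyond the nominal length of the Françoise sequence to incorporate the primitive $r_{\ell+1}$ of the newly-exact form $g_\ell\omega$, otherwise the telescoping leaves an unwanted residual $\epsilon^{\ell+1}g_\ell\omega$ obstructing the clean identity $dF_\epsilon=\tilde G\eta_\epsilon$. A secondary point of care is matching the Françoise algorithm to Definition~\ref{DLR}, so that each primitive step is recognized as a Darboux-type adjunction and each quotient step is seen as internal to the current field, ensuring that the resulting tower is genuinely Darboux rather than merely Liouville.
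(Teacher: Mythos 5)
Your part (i) and the integrating-factor identity are essentially the paper's argument: you build the Darboux tower inductively from the Fran\c coise pairs (the paper adjoins $g_i$ first via the Gelfand--Leray derivative and then $r_i$, you adjoin $r_i$ first and recover $g_i$ differentially -- both are fine), and your telescoping computation giving $dF_\epsilon=\tilde G\,\eta_\epsilon$ is correct; your care with the index $\ell+1$ (the primitive $r_{\ell+1}$ of $g_\ell\omega$ must be included, since $g_{\ell+1}=0$) is in fact tidier than the paper's own bookkeeping, which stops the sum at $\ell$.

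Part (ii), however, has a genuine gap. You take $f:=\tilde G$ and $\tilde\theta_\epsilon:=d\log\tilde G$; this does satisfy the formal length-$2$ relations $d\eta_\epsilon=\eta_\epsilon\wedge\tilde\theta_\epsilon$, $d\tilde\theta_\epsilon=0$, but it is exactly the intermediate object the paper denotes $\theta_\epsilon=dG^{-1}/G^{-1}$, whose coefficients involve the iterated integrals $g_i$ and therefore live only in the Darboux/Liouville tower over $K_{F,\omega}$, not in $K_{F,\omega}$ itself. The content of statement (ii) -- the reason it is not just a restatement of (i) -- is that the Godbillon--Vey form can be chosen \emph{meromorphic}, i.e.\ with coefficients in the field $K_{F,\omega}$ generated by the data $dF,\omega$; the paper obtains this by invoking Singer's theorem \cite{S} (see also \cite{Cthesis}): since $\eta_\epsilon$ has a Liouvillian first integral, there exists a closed form $\tilde\theta_\epsilon$ with coefficients in $K_{F,\omega}$ satisfying $d\eta_\epsilon=\eta_\epsilon\wedge\tilde\theta_\epsilon$; one then solves $df=f\tilde\theta_\epsilon$ (so $f$ lies in a single Liouville step over $K_{F,\omega}$, as an exponential of an integral of a closed form in the base field, and is in general a \emph{different}, possibly multivalued, integrating factor than $\tilde G$), checks that $f\eta_\epsilon$ is closed, and defines $\tilde F_\epsilon$ by $d\tilde F_\epsilon=f\eta_\epsilon$. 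Without this descent step your $\tilde\theta_\epsilon$ is not a form with coefficients in $K_{F,\omega}$, so the claimed meromorphy and the intended structure of the conclusion ($f$ in a Liouville extension of $K_{F,\omega}$ itself, $\tilde F_\epsilon$ Darboux over that extension) are not established; your version of (ii) collapses into (i). To repair it, keep your computation as the verification that $\eta_\epsilon$ admits \emph{some} length-$2$ Godbillon--Vey sequence (equivalently, a Liouvillian first integral), and then apply Singer's theorem to replace $d\log\tilde G$ by a closed form with coefficients in $K_{F,\omega}$.
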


\begin{remark}
Note that we are restricting our study to a tubular neighborhood $U$ of a cycle $\gamma_0$. A first integral $F_\epsilon$ which is Darboux in $U$ can be more complicated (Liouville, Riccati,...) when studied globally. 
\end{remark}

\begin{remark} In Theorem \ref{thm:first integral} (ii) we prove in particular that if the deformation \eqref{eq:deformation} has a finite Fran\c coise sequence, then it has a Liouvillian first integral. The converse is an interesting question. 
 \end{remark}

\begin{remark} In Theorem \ref{thm:first integral} we suppose that \eqref{eq:deformation} has a Fran\c coise sequence of finite order. What happens in the case of  $\ell=\infty$? In particular, is it possible to give  a condition assuring that a deformation \eqref{eq:deformation} has a Liouville or a Riccati first integral in these terms?
\end{remark}

\section{Proof of Theorem \ref{thm:main}}
\begin{proof}
We first prove the direct implication of the statement (i), the converse will follow from (iii)(b).
If the functions $M_i$ identically vanish for  $i=1,\ldots,k$, then one can build a first integral $F_\epsilon$ of $dF+\epsilon \omega$ $\mod \epsilon^{k+1}$ in the following way: extend $F$ to transversal $\Sigma$ to $\gamma\times \{0\}$ in $\C^3=\C^2_{x,y}\times \C_\epsilon$ as $F(x,y,\epsilon)=F(x,y)$, and extend it to a neighborhood $U$ of $\gamma\times\{0\}$ in $\C^3$ by the flow. The extension $F_\epsilon$ is a multivalued function, but different branches of $F_\epsilon$ agree $\mod \epsilon^{k+1}$ on $\Sigma$ by assumption, and therefore everywhere in $U$.
In other words, in the decomposition $F_\epsilon=F+\sum_{i\ge 1}\epsilon^i F_i$, $F_i=F_i(x,y)$, the functions $F_i$ are univalued for $i=1,\ldots,k$.

This implies that in the decomposition
$$
\tilde{d}F_\epsilon=\left(F_\epsilon\right)'_\epsilon d\epsilon
+\left(dF+\epsilon\omega\right)G,
$$
the coefficients $\left(F_\epsilon\right)'_\epsilon, G$ are univalued modulo terms of order $\ge k+1$, and we take 
 $\Omega:= j^{k-1}_\epsilon \left(F_\epsilon\right)'_\epsilon d\epsilon+
 \left(dF+\epsilon\omega\right)j^k_\epsilon G$, where $j^k_\epsilon$ denotes the $k$-th jet with respect to $\epsilon$.  Then $\Omega$ verifies \eqref{Omegak}.

Using \eqref{eq:g_iR_i}, the proof of (ii) follows from the computation:
\begin{align*}
(dF+\epsilon\omega)\left(1+\sum_{i=1}^k (-1)^i\epsilon^i g_i\right) =&
dF+\sum_{i=1}^k\epsilon^i\left((-1)^ig_i dF+(-1)^{i-1}g_{i-1}\omega\right)\\
=&dF+\sum_{i=1}^k\epsilon^i(-1)^{i-1}dr_{i}\mod \epsilon^{k+1},
\end{align*}
where $g_0\equiv1$. Therefore, by \eqref{eq:giri} and \eqref{eq:rGF}, 
\begin{align*}
\Omega=Rd\epsilon+G(dF+\epsilon\omega)=&\left(\sum_{i=1}^k(-1)^{i-1}i\epsilon^{i-1}r_i\right)d\epsilon+dF+\sum_{i=1}^k\epsilon^i(-1)^{i-1}dr_{i}\\
=&\tilde{d}\left(F+\sum_{i=1}^k\epsilon^i(-1)^{i-1}r_{i}\right)\mod \epsilon^{k+1}
\end{align*}
is closed up to order $\epsilon^{k+1}$ and therefore satisfies \eqref{Omegak}.

\medskip
We prove statement (iii)(a) and (iii)(b) simultaneously by induction.
We define weights of monomials by posing $w(x)=w(y)=w(dx)=w(dy)=0$ and $w(\epsilon)=w(d\epsilon)=1$, so $d$ and $\tilde{d}$ preserve weights.
We will denote by $o_k$ any collection of terms of weight $> k$. In these notations, \eqref{Omegak} is equivalent to 
\begin{equation}\label{eq:integrability k 2}
\Omega\wedge\tilde{d}\Omega=o_{k+1}.
\end{equation}

Let $N_j=1+\sum_{i=1}^j\epsilon^in_i$. We construct the function $N=N_k$ by induction.

Consider first $k=0$. A simple computation shows that
$$
\Omega\wedge \tilde{d}\Omega=dF\wedge d\epsilon\wedge(\omega-dR_1)+o_1,
$$
so, simplifying by $\wedge d\epsilon$,  \eqref{eq:integrability k 2} for $k=0$ is equivalent to
$$
dF\wedge (\omega-dR_1)=0.
$$
By Lemma~\ref{lem:relexactness}, this equation can be solved if and only if $\int_\gamma\omega\equiv0$, i.e. if and only if the first Fran\c coise condition $M_1\equiv0$ is satisfied. Therefore, the existence of $\Omega$ satisfying \eqref{Omegak} for $k=0$ is equivalent to the first Fran\c coise condition, and we can choose $r_1$ in   \eqref{eq:g_iR_i} to be equal to $R_1$,
$$
\omega=dr_1+g_1dF.
$$
Hence,
\begin{equation}
\Omega=r_1d\epsilon+(dF+\epsilon\omega)(1+\epsilon \beta_1)+o_1
\end{equation}
for some function $\beta_1$. Therefore,
$$
\Omega=\left[r_1d\epsilon+(dF+\epsilon\omega)(1-\epsilon g_1)\right]\left(1+\epsilon (\beta_1+g_1)\right)+o_1=N_1\tilde{d}F_{\epsilon,1}+o_1,
$$
where $N_1=1+\epsilon (\beta_1+g_1)$ and $F_{\epsilon,1}=F+\epsilon r_0$.

Now, let $k>0$ and assume \eqref{eq:integrability k 2}. In particular, it means that 
$\Omega\wedge\tilde{d}\Omega=o_k$. By induction, we have  
\begin{equation*}
\Omega=N_{k-1}\tilde{d}F_{\epsilon,k-1}+o_{k-1}, \quad\text{where}\quad F_{\epsilon,k-1}=F+\epsilon r_0-\ldots+(-1)^{k-2}\epsilon^{k-1} r_{k-1}.
\end{equation*}
Define 
$$
\Theta=N_{k-1}^{-1}\Omega=\tilde{d}F_{\epsilon,k-1}+\theta_k+o_{k},
$$
where $\theta_k$ is homogeneous of weight $k$. We have 
$\Theta\wedge \tilde{d}\Theta=\Omega\wedge \tilde{d}\Omega=o_{k}$.

But $\tilde{d}\Theta=\tilde{d}\theta_k$ has weight $k$. Therefore
$$
\Theta\wedge \tilde{d}\Theta=dF\wedge\tilde{d}\theta_k+o_{k}.
$$
Note that $\Theta$  has form \eqref{eq:Omega}, with $G_i, R_i$ as in \eqref{eq:giri} for $i\le k-1$. Separating terms of weight $k$, we get 
$$
\theta_k=\epsilon^{k-1}R_{k}d\epsilon+\epsilon^kG_k dF+(-1)^{k-1}\epsilon^kg_{k-1}\omega.
$$
Therefore

\begin{equation}\label{eq:induction step}
0=dF\wedge\tilde{d}\theta_k=\epsilon^{k-1}dF\wedge d\epsilon\wedge\left((-1)^{k-1}kg_{k-1}\omega-d R_{k}\right).
\end{equation}
As $\Theta$ is a solution of \eqref{Omegak}, this equation is solvable, which, by Lemma~\ref{lem:relexactness}, means that $\int_\gamma g_{k-1}\omega\equiv 0$, i.e. that the $k$-th Melnikov function vanishes identically. Moreover, \eqref{eq:induction step} implies 
$$
kg_{k-1}\omega=(-1)^{k-1}dR_{k}+ kg_k dF,
$$
i.e. Fran\c coise decomposition \eqref{eq:g_iR_i} of $g_{k-1}\omega$ with $k$-th Francoise pair $(g_k, r_k)$, such that $R_{k}=(-1)^{k-1}kr_k$. 

Therefore
\begin{flalign*}
&\Theta=\left(r_0+\ldots+\epsilon^{k-1}(-1)^{k-1}kr_k\right)d\epsilon\\
&\qquad+\left(dF+\epsilon\omega\right)\left(1+\ldots+(-1)^{k-1}\epsilon^{k-1}g_{k-1}+\epsilon^kG_k\right)+o_{k}=\\
&\qquad=\left(1+\epsilon^k(G_k+(-1)^{k-1}g_k)\right)\tilde{d}F_{\epsilon,k}+o_{k},
\end{flalign*}
where $F_{\epsilon,k}=F+\epsilon r_0-\ldots+(-1)^{k-1}\epsilon^{k} r_{k}$,
and
$$
\Omega=N_k\tilde{d}F_{\epsilon,k}+o_{k},\quad N_k=N_{k-1}\left(1+\epsilon^k(G_k+(-1)^{k-1}g_k)\right),
$$
as required.
\end{proof}

\section{Proof of Theorem \ref{thm:first integral}}

\begin{proof}
Proof of (i): 
Let $(g_i,r_i)$, $i=0,1,2,\ldots$ be a Fran\c coise sequence and assume that $g_i=0$, for $i\geq \ell+1$ (see Remark \ref{g_i=0}).
Let 
\begin{equation}\label{eta,G,F}
\eta_\epsilon=dF+\epsilon\omega, \quad G=\sum_{i=0}^\ell(-1)^{i}\epsilon^ig_i, \quad F_\epsilon=F+\sum_{i=1}^\ell(-1)^{i+1}\epsilon^ir_i.
\end{equation}

It follows from the definition of Fran\c coise pairs \eqref{eq:g_iR_i} that
\begin{equation}\label{GdF}
G\eta_\epsilon=dF_\epsilon.
\end{equation}
Differentiating \eqref{eq:g_iR_i} and dividing by $dF$ (that is, applying the Gelfand-Leray derivative), one obtains
$$
dg_i=\frac{dg_{i-1}\wedge\omega }{dF}+g_{i-1}\frac{d\omega}{dF}=:\eta_{i-1}.
$$
By induction, from the Definition \ref{DLR},  $g_i$ is Darboux, for $i=0,\ldots,\ell$. It now follows from \eqref{eq:g_iR_i} that $r_i$, $i=1,\ldots,\ell$, is Darboux as well and by Definition \ref{Feps}, the first integral $F_\epsilon$ is Darboux with respect to the field $K_{F,\omega}$. 

\medskip
Proof of (ii):
Let $\eta_\epsilon$, $G$ and $F_\epsilon$ be as in \eqref{eta,G,F}.  Now from \eqref{GdF}  it follows that
$$
d\eta_\epsilon=\frac{dG^{-1}}{G^{-1}}\wedge G^{-1}dF_\epsilon=\theta_\epsilon\wedge\eta_\epsilon, \quad \text{for  } \theta_\epsilon=\frac{dG^{-1}}{G^{-1}}.
$$
Hence, $d\theta_\epsilon=0$. The two equations together 
give a Godbillon-Vey sequence of length 2 in a Liouville extension of the space of forms with coeffients in $K_{F,\omega}$ in $(x,y)\in U$ and holomorphic with respect to the parameter $\epsilon$. 

Now Singer's theorem \cite{S} (see also \cite{Cthesis}) gives that there exists a form $\tilde{\theta}_\epsilon$ with coeffcients in $K_{F,\omega}$ (holomorphic with respect to $\epsilon$) verifying the same Godbillon-Vey equations:
$$
\begin{aligned}
d\eta_\epsilon&=\eta_\epsilon\wedge\tilde{\theta_\epsilon},\\
d\tilde{\theta}_\epsilon&=0.
\end{aligned}
$$
That is $\tilde{\theta}_\epsilon$ is closed. Hence, there exists a (possibly multivalued) function $f$ defined in $U$ such that 
$$
df=f\tilde{\theta}_\epsilon.
$$
One verifies that the form $f\tilde{\theta}_\epsilon$ is closed. 
This means that there exists a (possibly multivalued) function
$\tilde{F}_\epsilon$ verifying
$$
d\tilde{F}_\epsilon=f\eta_\epsilon.
$$
\end{proof}

\section{Classical Godbillon-Vey sequences and examples}

Let $\Omega$ be the form given by \eqref{eq:Omega}. 
We apply the classical Godbillon-Vey condition \eqref{integrability} to the form 
$$
\frac{\Omega}{R}=d\epsilon+\eta_0+\epsilon\eta_1+\ldots+\tfrac{\epsilon^i}{i!}\eta_i+\ldots
$$

Comparing to the closed form \eqref{eq:Omega}, we conclude that the forms $\eta_1, \ldots$ defined by
\begin{equation}\label{eq:GV sequence from our form}
\eta_\epsilon=\sum \frac{\epsilon^i}{i!}\eta_i=\frac{dF_\epsilon}{d_\epsilon F_\epsilon}=\frac{dF+\sum_{i=1}^\infty\epsilon^i(-1)^{i-1}dr_i}{\sum_{i=1}^\infty(-1)^{i-1}i\epsilon^{i-1}r_i}
\end{equation}
from a Godbillon-Vey sequence of $\tfrac{dF}{R_1}$:
$$
\eta_0=R_1^{-1}dF,\quad \eta_1=R_1^{-1}(2R_2 dF+dR_1),\quad\ldots,
$$
and the forms 
$$
\tilde{\eta}_1=2R_1^{-1}(R_2 dF+dR_1),\ldots, \tilde{\eta}_i=R_1^{i-1}\eta_i,
$$
form a Godbillon-Vey sequence of $dF$. This sequence could be infinite.

In classical setting one starts from a given foliation $\omega=0$ for $\epsilon=0$, and looks for a simplest perturbation $\omega_\epsilon=0$ such that the form $d\epsilon+\omega_\epsilon$ is integrable. Results of \cite{Cthesis} say that if the foliation $\omega=0$ is  Darboux integrable, Liouville integrable or Riccati integrable, then one can find perturbations such that $\omega_\epsilon$ either does not depend on $\epsilon$ or is polynomial in $\epsilon$ of degree $1$ or $2$, respectively, i.e. that the Godbillon-Vey sequence has finite length. 

In this paper, given a  perturbation \eqref{eq:deformation}, and we construct a one-form $\Omega$ such that its restriction to the planes $\{\epsilon=\const\}$ defines the same foliation as the initial one. In other words, unlike the classical settings, here the perturbation of the  foliation  is almost uniquely prescribed, the only freedom being the coefficients ${G},{R}$ in \eqref{eq:Omega}. Thus the length of the corresponding Godbillon-Vey sequence can be  infinite even if $\omega_\epsilon$ is Liouville integrable for all $\epsilon$.

\begin{example}
	Let $F=x^2+y^2$ and $\omega=y^2dx$. For symmetry reasons, the perturbation \eqref{eq:deformation} is integrable. Computation shows that 
	\begin{equation}
	g_n=\frac{(-1)^n}{n!}x^n,\quad r_n=\frac{(-1)^n}{n!}\left(\frac{2}{n+2}x^{n+2}-x^ny^2\right).
	\end{equation}
	Then a first integral is given by 
	\begin{align*}
	F_\epsilon=& x^2+y^2+\sum_{n=1}^\infty (-1)^{n-1}\epsilon^nr_n
	={{ e}^{\epsilon x}} \left( {y}^{2}+2\,{\frac {x}{\epsilon}}-2\,{\epsilon}^{-2} \right),\\
	\tilde{d}F_\epsilon=&e^{\epsilon x}(\epsilon y^2+2x)dx +2e^{\epsilon x}ydy,
	\end{align*}
	and therefore the Godbillon-Vey forms $\omega_i$ are Taylor coefficients in $\epsilon$ of \begin{equation}
	\left(\frac{\partial}{\partial \epsilon}F_\epsilon\right)^{-1}dF_\epsilon=dF+\sum_{n=1}^\infty\epsilon^i\omega_i.
	\end{equation}
	One can see that this series is not polynomial in $\epsilon$, though the first integral $F_\epsilon$ is of Liouville type. Some authors call this type of functions generalized Darboux.
\end{example}
\begin{example}
	For a trivially integrable perturbation $\omega=gdF$, the Fran\c coise pairs are given by $g_i=g^i$, $i=1,\ldots$, and $r_1\equiv 1$, $r_i=0$ for $i=2,\ldots$. Therefore the first integral is 
	$$
	F_\epsilon=F+\epsilon, \qquad R\equiv 1,\quad  G=1+\sum_{i=1}^\infty (-1)^i\epsilon^ig^i=(1-\epsilon g)^{-1},
	$$
\edz{iz $F_\epsilon$ ok? }	
	and 
	$$
	\Omega=d\epsilon + (dF+\epsilon g dF)(1-\epsilon g)^{-1}=dF_\epsilon.
	$$
\end{example}

\begin{example}
	For a Darboux integrable perturbation \eqref{eq:deformation} with $\omega=F\frac{dr}{r}$ we have 
	$$
	g_i=(-1)^i\frac{(\log r)^i}{i!},\qquad r_i=-Fg_i,
	$$
	so
	$$
	F_\epsilon=F+\sum_{i=1}^\infty (-1)^{i-1}\epsilon^ir_i=
	F+F\sum_{i=1}^\infty\epsilon^i\frac{(\log r)^i}{i!}=Fe^{\epsilon\log r}=Fr^\epsilon.
	$$
	Then 
	$$
	\tilde{d}F_\epsilon=Fr^\epsilon\log r d\epsilon+\left(r^\epsilon dF+\epsilon F r^{\epsilon-1}dr\right)=Fr^\epsilon\log r \left[d\epsilon+\left(\frac{dF}{F\log r}+\epsilon\frac{dr}{r\log r}\right)\right].
	$$
	Therefore the forms 
	$$
	\omega_1=\frac{dr}{r\log r},\quad  \omega_i=0, \qquad i=2,\ldots
	$$ form a 
	Godbillon-Vey sequence for $\frac{dF}{F\log r}$, and hence
	$$
	\tilde{\omega}_1=\frac{dr}{r\log r}+\frac{dF}{F}+\frac{dr}{r},\quad  \tilde{\omega}_i=0, \qquad i=2,\ldots
	$$
	form a 
	Godbillon-Vey sequence for $dF$, so this Godbillon-Vey sequence for $dF+\epsilon\omega$ has length $1$.
\end{example}

\subsection*{Acknowledgements}
 {We would like to thank Sergei Voronin for fruitful discussions.}

\end{document}